\documentclass{amsart}
\usepackage{amsfonts,amsmath}

\usepackage[psamsfonts]{amssymb}

\theoremstyle{plain}% default
\newtheorem{theorem}{Theorem}[section]
\newtheorem{lemma}[theorem]{Lemma}
\newtheorem{proposition}[theorem]{Proposition}

\theoremstyle{definition}

\theoremstyle{remark}

\begin{document}

\title[Generating function for $\Delta$--Meixner--Sobolev orthogonal polynomials]{A
generating function for  non--standard orthogonal polynomials involving differences: the Meixner
case}

\author[J. J. Moreno-Balc\'{a}zar]{Juan J. Moreno-Balc\'{a}zar}
\address{Juan J. Moreno-Balc\'{a}zar. Departamento de Estad\'{\i}stica y Matem\'{a}tica Aplicada\\
Instituto Carlos I de F\'{\i}sica Te\'{o}rica y Computacional\\
Universidad de Almer\'{\i}a. Almer\'{\i}a. Spain.} \email{balcazar@ual.es}
\author[T. E. P\'{e}rez]{Teresa E. P\'{e}rez}
\address{Teresa E. P\'{e}rez. Departamento de Matem\'{a}tica Aplicada\\
Instituto Carlos I de F\'{\i}sica Te\'{o}rica y Computacional\\
Universidad de Granada. Granada. Spain.} \email{tperez@ugr.es}
\author[M. A. Pi\~{n}ar]{Miguel A. Pi\~{n}ar}
\address{Miguel A. Pi\~{n}ar. Departamento de Matem\'{a}tica Aplicada\\
Instituto Carlos I de F\'{\i}sica Te\'{o}rica y Computacional\\
Universidad de Granada. Granada. Spain.}
\email{mpinar@ugr.es}

\thanks{Partially supported by Ministerio de Educaci\'{o}n y Ciencia
        (MEC) of Spain and by the European Regional Development Fund
        (ERDF) through the grant MTM 2005--08648--C02, and Junta de
        Andaluc\'{\i}a, G. I. FQM0229 and Excellence Projects FQM481 and P06--FQM--01735.}

\begin{abstract}
 In this paper we deal with a family of non--standard
 polynomials orthogonal with respect to an inner product involving
 differences. This type of inner product is the so--called
 $\Delta$--Sobolev inner product. Concretely, we consider the case
 in which both measures appearing in the inner product
 correspond to the Pascal distribution (the orthogonal polynomials associated to
 this distribution are known as Meixner polynomials). The aim of this work is to obtain a generating
 function for the $\Delta$--Meixner--Sobolev orthogonal
 polynomials and, by using a limit process, recover a generating
 function for Laguerre--Sobolev orthogonal polynomials.
\end{abstract}

\subjclass[2000]{Primary 33C47, Secondary 42C05}

\keywords{Meixner polynomials, non--standard orthogonality,
generating function}

\maketitle

\section{Introduction}

Classical Meixner  polynomials $m_n(x;\beta,c)$, or Meixner
polynomials of the first kind according to the terminology used in
\cite{Ch}, are those polynomials associated to the Pascal
distribution
$$\mu(k)=\frac{c^{k} (\beta)_k}{k!}\, , \quad k=0,1,2\ldots, \quad  0<c<1, \quad \beta>0\, ,$$
that is, they are orthogonal with respect to the discrete standard
inner product
$$(f,g)=\int f\, g\,d\mu(k)=\sum_{k=0}^{+\infty} f(k)
g(k) \frac{c^{k} (\beta)_k}{k!}\, ,
$$
and they satisfy a second
order difference equation (see, for instance, \cite{nik-uva}). Some
authors in the literature use the distribution $(1-c)^{\beta}\mu(k)$
to have a probability measure. In this paper, following the
classical texts \cite{Ch} and \cite{nik-uva} or, more recently
\cite{ism}, we will consider  the discrete measure $\mu(k)$.

In \cite{AGM2}, a generalization of the above inner product is
introduced. The authors consider the $\Delta$--Sobolev inner product
\begin{equation} \label{sip}
(f,g)_S = \sum_{k=0}^{+\infty} f(k) g(k) \frac{c^{k} (\beta)_k}{k!}
+ \lambda \sum_{k=0}^{+\infty} \Delta f(k) \Delta g(k) \frac{c^{k}
(\beta)_k}{k!}
\end{equation}
with $\beta > 0, 0<c<1, \,  \lambda > 0\, ,$ and where $\Delta$ is
the usual forward difference operator defined by $\Delta
f(k)=f(k+1)-f(k)\, .$

As we can observe, (\ref{sip}) is a non--standard inner product ,
that is, $(xf,g)_S\ne (f,xg)_S\, .$ Thus, the corresponding sequence
of orthogonal polynomials does not satisfy a three--term recurrence
relation, and in general, the nice algebraic and differential
properties of standard orthogonal polynomials do not hold any more.

 We denote by  $\{S_n\}$ the
sequence of polynomials orthogonal with respect to \eqref{sip},
normalized by the condition that $S_n(x)$ and the Meixner polynomial
$m_n(x;\beta,c)$ have the same leading coefficient
($n=0,1,2,\ldots$). The polynomials $S_n(x)$ are the so--called
$\Delta$--Meixner--Sobolev orthogonal polynomials.  As we have
already mentioned, the polynomials $S_n(x)$ were introduced in
\cite{AGM2} where several algebraic and difference relations between
the families of polynomials $S_n(x)$ and $m_n(x;\beta,c)$ were
established. Asymptotic results for $S_n(x)$ when $n \to +\infty$,
have been obtained in \cite{AGMB1}.

The main goal of this paper is to obtain a generating function for
the polynomials $S_n(x)\, .$ Furthermore, we will be able to recover
the results obtained in \cite{mp} for Laguerre--Sobolev orthogonal
polynomials, that is, using a limit process we obtain the generating
function for Laguerre--Sobolev orthogonal polynomials from the
generating function for $\Delta$--Meixner--Sobolev orthogonal
polynomials. Thus,  we are in some sense working in one of the
direction pointed out in the recent survey about Sobolev orthogonal
polynomials on unbounded supports \cite{mmb} (second item of Section
4).

The structure of the paper is as follows: in Section 2 we state some
well--known results on classical Meixner polynomials which will be
used along the paper. Section 3 gives the basic relations on
$\Delta$--Meixner--Sobolev polynomials. In particular, it is shown
that a generating function for the $\Delta$--Meixner--Sobolev
polynomials can be reduced to a generating function involving  the
classical Meixner polynomials (Proposition \ref{serie}). In Section
4 a generating function for $\Delta$--Meixner--Sobolev polynomials
is derived. The main results are stated in Theorem \ref{beta1} and
\ref{mth}. Finally, in Section 5 we recover the generating function
for Laguerre--Sobolev orthogonal polynomials obtained in \cite{mp}.

\section{Classical Meixner Polynomials}

Let $\beta$, and $c$ be real numbers such that $c \neq 0, 1$, and
$\beta \neq 0, -1, -2, \ldots$ It is well known that classical
Meixner polynomials $m_n(x;\beta,c)$ can be defined by their
explicit representation in terms of the hypergeometric function
${}_2F_1$ (see, for instance, \cite[p. 175--177]{Ch} where a
different normalization is used),
\begin{equation} \label{exprep}
m_n(x;\beta,c) = \frac{(\beta)_n}{n!} {}_2F_1(-n,-x;\beta; 1 -
c^{-1})= \frac{(\beta)_n}{n!}\, \sum_{k=0}^{n} \binom{n}{k}
\frac{(-x)_k}{(\beta)_k} \left( \frac{1}{c}-1\right)^k,
\end{equation}
where $(a)_n$ denotes the usual Pochhammer symbol,
$$\quad (a)_0=1, \quad (a)_n = a (a+1) \cdots (a+n-1), \quad n\ge 1.$$
Observe that (\ref{exprep}) provides $m_n(x;\beta,c)$ as a
polynomial of exact degree $n$ with leading coefficient
\begin{equation}\label{lc}\frac{1}{n!}\, \left(1 -
\frac{1}{c}\right)^n.\end{equation}
If $\beta > 0$ and $0 < c < 1$, classical Meixner polynomials are
orthogonal with respect to the inner product,
\begin{equation} \label{mip}
(f,g) = \sum_{k=0}^{+\infty} f(k) g(k) \frac{c^{k}
(\beta)_k}{k!},
\end{equation}
and then,
\begin{equation} \label{norm}
\sum_{k=0}^{+\infty}  \left(m_n(k;\beta,c)\right)^2 \frac{c^{k}
(\beta)_k}{k!} = \frac{(\beta)_n}{n!\, c^{n}\,  (1-c)^{\beta}},
\quad n=0, 1, 2, \ldots
\end{equation}

\bigskip

Simplifying expression (\ref{exprep}), we get
\begin{equation} \label{mex-gen}
m_n(x;\beta,c) = \sum_{k=0}^{n}\frac{(\beta+k)_{n-k}}{k! \,
(n-k)!}\, (-x)_k \left(\frac{1}{c}-1\right)^k.\end{equation}

Observe that, for every value of the parameter $\beta$, expression
(\ref{mex-gen}) defines a polynomial of exact degree $n$, and
leading coefficient (\ref{lc}). In this way,  we can define Meixner
polynomials for all $\beta\in\mathbb{R}$.

Very simple  manipulations of the explicit representation
(\ref{mex-gen}) show that the main algebraic properties of the
classical Meixner polynomials still hold for the general case $\beta
\in \mathbb{R},$ and $c\in \mathbb{R}\setminus\{0,1\}$, although the
orthogonality given in (\ref{mip}) holds only for $\beta>0$ and
$0<c<1\,  .$ In particular, for $n\ge 1$,
 Meixner polynomials
satisfy a three--term recurrence relation
\begin{eqnarray*}
c\,(n+1)\, m_{n+1}(x;\beta,c) &=& \left[x\,(c-1) + \beta\, c
 + n\,(c+1)\right]\, m_n(x;\beta,c) -\\
&~ & -(n + \beta - 1 )\, m_{n-1}(x;\beta,c),
\end{eqnarray*}
with the initial conditions $m_{-1}(x;\beta,c)=0$, and
$m_0(x;\beta,c)=1$.

Moreover, the following relations are satisfied:
\begin{eqnarray}
&~& m_n(x;\beta,c) - m_{n-1}(x;\beta,c) = m_{n}(x;\beta-1,c), \label{der2}\\
&~& \Delta \left[m_n(x;\beta,c) - m_{n-1}(x;\beta,c) \right] =
\frac{c-1}{c} m_{n-1}(x;\beta,c). \label{der1}
\end{eqnarray}

\bigskip

The generating function for classical Meixner polynomials plays an
important role in this work. This generating function can be found,
for instance, in \cite[p. 176]{Ch} or \cite[p. 175]{ism}. Here, we
give an elementary proof of this result for general values of the
parameter $\beta$.

\begin{lemma} \label{leglag}
For $|\omega| < c < 1$ and $\beta \in \mathbb{R}\, ,$  we have
\begin{equation} \label{genlag}
\sum_{n=0}^{+\infty}\, m_n(x;\beta,c)\, \omega^n =
\left(1-\frac{\omega}{c}\right)^{x}\,
\left(1-\omega\right)^{-x-\beta}
\end{equation}
\end{lemma}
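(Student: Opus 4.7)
The plan is to start from the explicit polynomial representation \eqref{mex-gen}, plug it into the generating series, and swap the order of summation so that both resulting single sums can be recognized as binomial series. For $|\omega|<c<1$ one checks that the double series
\[
\sum_{n=0}^{\infty}\sum_{k=0}^{n}\frac{(\beta+k)_{n-k}}{k!\,(n-k)!}\,(-x)_k\Bigl(\tfrac{1}{c}-1\Bigr)^{k}\omega^{n}
\]
is absolutely convergent (the inner geometric-type factors are controlled by $|\omega/c|<1$), so Fubini applies.

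After swapping, with the substitution $m=n-k$, I would rewrite the sum as
\[
\sum_{k=0}^{\infty}\frac{(-x)_k}{k!}\Bigl(\tfrac{1}{c}-1\Bigr)^{k}\omega^{k}\,
\sum_{m=0}^{\infty}\frac{(\beta+k)_m}{m!}\,\omega^{m}.
\]
The inner sum is the standard binomial series and equals $(1-\omega)^{-\beta-k}$, valid for $|\omega|<1$. Pulling out the factor $(1-\omega)^{-\beta}$ leaves
\[
(1-\omega)^{-\beta}\sum_{k=0}^{\infty}\frac{(-x)_k}{k!}\,z^{k},\qquad
z:=\frac{\omega(1-c)}{c(1-\omega)}.
\]
The hypothesis $|\omega|<c<1$ gives $|z|<1$, so a second application of the binomial series collapses the remaining sum to $(1-z)^{x}$.

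The final step is the algebraic simplification
\[
1-z=\frac{c(1-\omega)-\omega(1-c)}{c(1-\omega)}=\frac{c-\omega}{c(1-\omega)}=\frac{1-\omega/c}{1-\omega},
\]
which produces the claimed closed form
\[
(1-\omega)^{-\beta}\left(\frac{1-\omega/c}{1-\omega}\right)^{x}=\Bigl(1-\tfrac{\omega}{c}\Bigr)^{x}(1-\omega)^{-x-\beta}.
\]
Note that once the identity is established for $|\omega|<c$ in the sense of formal/convergent power series, it holds for all $\beta\in\mathbb{R}$ since both sides are analytic in $\beta$ (equivalently, the coefficient of $\omega^{n}$ on each side is a polynomial in $\beta$ for fixed $n$). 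The only mildly delicate point is the justification of the summation swap; everything else reduces to the elementary binomial theorem, so I do not expect a serious obstacle.
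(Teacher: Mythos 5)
Your argument is correct and follows essentially the same route as the paper's own proof: substitute the explicit representation \eqref{mex-gen}, interchange the order of summation, evaluate the inner sum via the binomial series \eqref{f-han} to get $(1-\omega)^{-\beta-k}$, and then sum the remaining series in $k$ to $(1-z)^{x}$ with $z=\frac{(1/c-1)\omega}{1-\omega}$. The only differences are cosmetic (you make the Fubini justification and the analyticity-in-$\beta$ remark explicit, which the paper leaves implicit).
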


\begin{proof} From (\ref{mex-gen}), we get
\begin{align*}
\sum_{n=0}^{+\infty}\, m_n(x;\beta,c)\,\omega^n &= \sum_{k=0}^{+\infty}
\frac{(-x)_k}{k!} \left(\left(\frac{1}{c}-1\right) \omega
\right)^k\,\sum_{n=k}^{+\infty}
\frac{(\beta+k)_{n-k}}{(n-k)!} \, \omega^{n-k}\\
&=\sum_{k=0}^{+\infty} \frac{(-x)_k}{k!}
\left(\left(\frac{1}{c}-1\right) \omega
\right)^k\,\sum_{j=0}^{+\infty} \frac{(\beta+k)_j}{j!}\, \omega^{j}.
\end{align*}
Finally, using the well--known formula
\begin{equation} \label{f-han}
\sum_{j=0}^{+\infty}
\frac{(\alpha)_j}{j!}\,\omega^{j}=(1-\omega)^{-\alpha}, \quad
|\omega|<1,
\end{equation}
we obtain
\begin{align*}
\sum_{n=0}^{+\infty} m_n(x;\beta,c)\, \omega^n &= \sum_{k=0}^{+\infty} \frac{(-x)_k}{k!}
\left(\left(\frac{1}{c}-1\right) \omega \right)^k\, (1-\omega)^{-\beta-k} \\
&=(1-\omega)^{-\beta}\sum_{k=0}^{+\infty} \frac{(-x)_k}{k!}
\left(\frac{\left(\frac{1}{c}-1\right)\omega}{1-\omega}  \right)^k\\
&=(1-\omega)^{-\beta}\left(1-\frac{\left(\frac{1}{c}-1\right)\omega}{1-\omega}\right)^x=
\left(1-\frac{\omega}{c}\right)^{x}
\left(1-\omega\right)^{-x-\beta},
\end{align*}
for $|\omega|<c<1\, .$
\end{proof}

We  want to remark that, in this paper, we will use the previous
Lemma for $\beta>-1.$

\section{$\Delta$--Meixner--Sobolev orthogonal polynomials}

Let $\{S_n\}$ denote the sequence of polynomials orthogonal with
respect to the $\Delta$--Sobolev inner product
\begin{equation} \label{inpr}
(f,g)_S = \sum_{k=0}^{+\infty} f(k) g(k) \frac{c^{k} (\beta)_k}{k!}
+ \lambda \sum_{k=0}^{+\infty} \Delta f(k) \Delta g(k) \frac{c^{k}
(\beta)_k}{k!},\,
\end{equation}
with $\beta > 0, 0<c<1, \lambda > 0$. The polynomials $\{S_n\}$
are the so--called $\Delta$--Meixner--Sobolev orthogonal
polynomials, and they are normalized by the condition that the
leading coefficient of $S_n(x)$ equals the leading coefficient of
$m_n(x;\beta,c)$, $n\ge0$. Observe that $S_0(x) = m_0(x;\beta,c)$,
and $S_1(x) = m_1(x;\beta,c)$.

\bigskip

  The following result is obtained in \cite{AGM2}.

\begin{lemma}
There exist positive constants $a_n$ depending on $\beta, c$ and $\lambda$,
such that
\begin{equation} \label{twotwo}
m_n(x;\beta,c) - m_{n-1}(x;\beta,c) =
S_n(x) - a_{n-1} S_{n-1}(x), \quad n \ge 1.
\end{equation}
\end{lemma}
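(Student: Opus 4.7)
The plan is to expand the polynomial $P := m_n - m_{n-1}$ of degree $n$ in the Sobolev--orthogonal basis $\{S_j\}_{j \ge 0}$. Because $m_n$ and $S_n$ share the same leading coefficient recorded in (\ref{lc}), there exist unique scalars $b_0, \dots, b_{n-1}$ with
$$ m_n(x;\beta,c) - m_{n-1}(x;\beta,c) = S_n(x) + \sum_{k=0}^{n-1} b_k\, S_k(x), $$
and the lemma is equivalent to showing $b_k = 0$ for $k \le n - 2$ together with $b_{n-1} < 0$, for then $a_{n-1} := -b_{n-1}$ is automatically positive.

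To pin down each coefficient I would pair both sides with $S_k$ in $(\cdot,\cdot)_S$, obtaining $b_k\,(S_k, S_k)_S = (P, S_k)_S$, and then dissect $(P, S_k)_S$ into its two parts. For the difference part I apply (\ref{der1}): $\Delta P = \frac{c-1}{c}\, m_{n-1}(\cdot;\beta,c)$, so that this part equals $\lambda\,\frac{c-1}{c}$ times the \emph{standard} Meixner inner product $(m_{n-1}, \Delta S_k)$. Since $\deg \Delta S_k \le k-1$, classical orthogonality of $m_{n-1}$ with respect to the measure in (\ref{mip}) annihilates this term as soon as $k-1 < n-1$, i.e.\ for every $k \le n-1$. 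For the standard part, bilinearity yields $(m_n, S_k) - (m_{n-1}, S_k)$; the first summand vanishes whenever $k < n$ and the second whenever $k < n-1$, again by classical orthogonality. Combining, for $k \le n - 2$ both parts vanish, which forces $b_k = 0$.

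Only $b_{n-1}$ survives, and the sole nonzero contribution to $(P, S_{n-1})_S$ is $-(m_{n-1}, S_{n-1})$ from the standard part. Writing $S_{n-1} = m_{n-1} + q_{n-2}$ with $\deg q_{n-2} \le n-2$ (available because both polynomials have identical leading coefficients) and invoking standard orthogonality once more yields $(m_{n-1}, S_{n-1}) = (m_{n-1},m_{n-1})$, which is the strictly positive quantity in (\ref{norm}). Hence $b_{n-1}\,(S_{n-1}, S_{n-1})_S < 0$, so $b_{n-1} < 0$, and setting $a_{n-1} = -b_{n-1}$ delivers both the two-term shape and the positivity. The main point requiring care, rather than a genuine obstacle, is the alignment of parameters in (\ref{der1}): the right-hand side $m_{n-1}(\cdot;\beta,c)$ carries exactly the $\beta$ that governs the Pascal measure, which is precisely what permits classical orthogonality to be applied inside the Sobolev (difference) term and so makes the two-term collapse go through.
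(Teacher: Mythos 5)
Your proposal is correct and follows essentially the same route as the paper: expand $m_n - m_{n-1}$ in the Sobolev basis, use \eqref{der1} to turn the difference part into a standard Meixner pairing with $m_{n-1}$, and invoke classical orthogonality to kill all coefficients except the $(n-1)$-st, whose sign comes from $-(m_{n-1},m_{n-1})<0$. Your write-up even makes explicit the positivity step that the paper leaves as a one-line computation.
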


\begin{proof} Put
$$
m_n(x;\beta,c) - m_{n-1}(x;\beta,c) = m_n(x;\beta-1,c) =
S_n(x) + \sum_{i=0}^{n-1} c_i^{(n)} S_i(x).
$$
Then
$$
 c_i^{(n)} (S_i,S_i)_S = (m_n - m_{n-1}, S_i)_S.
$$
Applying \eqref{der2}, \eqref{der1}, and \eqref{inpr} to the
right--hand side, we obtain
$$
c_i^{(n)} = 0, \quad 0 \le i \le n-2,
$$
and
\begin{equation*}\begin{split}
 c_{n-1}^{(n)} (S_{n-1},S_{n-1})_S &=
 - \sum_{k=0}^{+\infty} m_{n-1}(k;\beta,c) S_{n-1}(k)\frac{(\beta)_k \, c^{k}}{k!}\\
 &= - \sum_{k=0}^{+\infty} \left( m_{n-1}(k;\beta,c) \right)^2 \frac{(\beta)_k \, c^{k}}{k!}.
\end{split}\end{equation*}
\end{proof}

\bigskip

The following recurrence relation for the coefficients $\{a_n\}$ in
\eqref{twotwo} is also obtained in \cite{AGM2}. Here, we write this
recurrence relation in an analogous form useful for our purposes.

\begin{lemma} \label{relrec}
The sequence $\{a_n\}_{n}$ in \eqref{twotwo} satisfies
\begin{equation*} \label{fc1}
a_{n} = \frac{n+\beta-1}{n+\beta-1 +
\left(1+\lambda\left(1-\frac{1}{c} \right)^2  \right) c\, n - c\, n \,
a_{n-1}}, \quad n \ge 1,
\end{equation*}
with
$$
a_0 = 1.
$$
\end{lemma}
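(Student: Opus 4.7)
The plan is to compute the Sobolev norm $(m_n - m_{n-1}, m_n - m_{n-1})_S$ in two different ways and match them. Throughout, I will use the abbreviation $h_n := (m_n, m_n) = \frac{(\beta)_n}{n! c^n (1-c)^\beta}$ from (\ref{norm}), and I will recycle the key identity extracted in the proof of the previous lemma, namely
\[
a_{n-1}\,(S_{n-1},S_{n-1})_S = h_{n-1},\qquad n\ge 1.
\]

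First I would square out (\ref{twotwo}) with respect to $(\cdot,\cdot)_S$. Orthogonality of $\{S_n\}$ makes the cross term vanish, giving
\[
(m_n - m_{n-1},\, m_n - m_{n-1})_S = (S_n,S_n)_S + a_{n-1}^2\,(S_{n-1},S_{n-1})_S.
\]
Next, I would expand the same quantity directly from the definition (\ref{inpr}). In the first (standard) piece I leave $m_n - m_{n-1}$ as it is and use orthogonality of the classical Meixner polynomials to obtain $h_n + h_{n-1}$. In the second (difference) piece I apply (\ref{der1}) to replace $\Delta[m_n-m_{n-1}]$ by $\tfrac{c-1}{c}\,m_{n-1}(x;\beta,c)$, so that it collapses to $\lambda\bigl(\tfrac{c-1}{c}\bigr)^2 h_{n-1}$. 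This yields
\[
(m_n - m_{n-1},\, m_n - m_{n-1})_S = h_n + h_{n-1}\Bigl[\,1 + \lambda\bigl(1-\tfrac{1}{c}\bigr)^2\Bigr].
\]

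Equating the two expressions and using $(S_n,S_n)_S = h_n/a_n$ and $a_{n-1}(S_{n-1},S_{n-1})_S = h_{n-1}$, I get
\[
\frac{h_n}{a_n} = h_n + h_{n-1}\Bigl[\,1 + \lambda\bigl(1-\tfrac{1}{c}\bigr)^2 - a_{n-1}\Bigr].
\]
Dividing by $h_n$ and observing the elementary ratio $h_{n-1}/h_n = nc/(n+\beta-1)$, the right-hand side rearranges into
\[
\frac{1}{a_n} = \frac{n+\beta-1 + \bigl(1 + \lambda(1-\tfrac{1}{c})^2\bigr)\,c\,n - c\,n\, a_{n-1}}{n+\beta-1},
\]
which is exactly the claimed recurrence. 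The initial value $a_0 = 1$ follows by specializing (\ref{twotwo}) at $n=1$, since $S_0 = m_0 = 1$ and $S_1 = m_1(x;\beta,c)$ force $m_1 - 1 = m_1 - a_0$.

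The only non-routine step is the double evaluation of $(m_n - m_{n-1}, m_n - m_{n-1})_S$; everything else is bookkeeping. The expected obstacle is ensuring that the forward difference piece is treated correctly (one must resist the temptation to reinterpret the weight $(\beta)_k c^k/k!$ as the measure for Meixner$(\beta-1)$ polynomials — that would not match the inner product). Instead, the efficient path is the one above: use (\ref{der1}) to turn the $\Delta$-piece into a standard norm of $m_{n-1}(x;\beta,c)$ against its \emph{own} orthogonality measure.
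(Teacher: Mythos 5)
Your proof is correct, and its computational core coincides with the paper's: both evaluate $\Delta$--Sobolev inner products of $R_n := m_n - m_{n-1}$ by combining \eqref{der1}, \eqref{norm} and the classical Meixner orthogonality, and both reduce to the ratio $h_{n-1}/h_n = nc/(n+\beta-1)$. The bookkeeping is organized differently, though. The paper expands the single orthogonality relation $0 = (S_{n+1}, R_n)_S$ into $(R_{n+1},R_n)_S + a_n(R_n,R_n)_S + a_n a_{n-1}(R_n,R_{n-1})_S = 0$ and evaluates all three terms in Meixner data; you evaluate only the one quantity $(R_n,R_n)_S$ in two ways, which is tidier, but at the price of importing the identity $a_{n-1}(S_{n-1},S_{n-1})_S = h_{n-1}$ from the final display of the preceding lemma's \emph{proof} (it is not part of that lemma's statement). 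That identity is indeed established there, since $c_{n-1}^{(n)} = -a_{n-1}$ and $c_{n-1}^{(n)}(S_{n-1},S_{n-1})_S = -h_{n-1}$, and it is equivalent to the paper's evaluation $(R_{n+1},R_n)_S = -h_n$, so the two arguments are interchangeable rearrangements of the same three pieces of information. Your derivation of $a_0 = 1$ from \eqref{twotwo} at $n=1$ is the same as the paper's.
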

\begin{proof} Write
\begin{eqnarray*}
R_0(x) &=& S_0(x), \\
R_n(x) &=& S_n(x) - a_{n-1}\, S_{n-1}(x), \quad n \ge 1,
\end{eqnarray*}
then for $n \ge 1$,
$$
(R_{n+1},R_n)_S + a_n (R_n,R_n)_S + a_n a_{n-1} (R_n,R_{n-1})_S = 0.
$$
After computing the $\Delta$--Sobolev inner products with
\eqref{norm}, \eqref{der1}, \eqref{inpr}, and \eqref{twotwo}, we
obtain \eqref{fc1}, for $n \ge 1$.

Finally, since $S_0(x) =m_{0}(x;\beta,c)$, and $S_1(x) = m_{1}(x;\beta,c)$, relation
\eqref{twotwo} implies $a_0 = 1$.
\end{proof}

\bigskip

In order to simplify the notations, from now on, we will denote by
\begin{equation*} \label{eta} \eta: = 1+\lambda\left(1-\frac{1}{c}
\right)^2 > 1.\end{equation*} Then, relation \eqref{fc1} reads
\begin{equation} \label{fc}
a_{n} = \frac{n+\beta-1}{n+\beta-1 + \eta \, c \, n - c \, n \, a_{n-1}}, \quad
n \ge 1,
\end{equation}
with $ a_0 = 1.$

To derive a generating function for $\Delta$--Meixner--Sobolev
orthogonal polynomials, we need more information about the sequence
$\{a_n\}$. The asymptotic behavior of this sequence was established
in \cite[Prop. 5]{AGMB1}. Again, we introduce this result in an
adequate form useful for our objectives, and  we also  give an
alternative and elemental proof.

\begin{lemma} \label{conv}
The sequence $\{a_n\}$ is convergent, and
\begin{equation*} \label{convan}
{a} = \lim_{n \to \infty} a_n = \frac{1+ \eta c-\sqrt{(1 + \eta c)^2 - 4c}}{2c},
\end{equation*}
is the smallest root of the equation
\begin{equation*} \label{eqconvan}
c \, z^2 - \left(1 +  \eta c \, \right)  z + 1 = 0.
\end{equation*}
\end{lemma}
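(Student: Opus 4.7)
The plan is to first observe that the recurrence \eqref{fc} keeps $\{a_n\}$ trapped in $(0,1]$, then use a $\limsup$/$\liminf$ squeeze against the two roots of the quadratic, avoiding any attempt to prove monotonicity (which in fact depends on the sign of $\beta-1$).

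\medskip

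\textbf{Step 1: Boundedness.} Starting from $a_0=1$, induction on $n$ shows that $0<a_n<1$ for every $n\ge 1$. Indeed, if $0<a_{n-1}\le 1<\eta$ then the denominator in \eqref{fc} equals $(n+\beta-1)+cn(\eta-a_{n-1})$, which is strictly greater than $n+\beta-1>0$, so $0<a_n<1$. In particular $\{a_n\}$ is bounded, and the two roots of $cz^2-(1+\eta c)z+1=0$ satisfy $a_-<1<a_+$ (the latter from $a_-a_+=1/c>1$ and $a_-<1$), so $a_+$ lies strictly outside the range of the sequence.

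\medskip

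\textbf{Step 2: Set up the limsup/liminf squeeze.} Let $L=\limsup_n a_n$ and $\ell=\liminf_n a_n$, both in $(0,1]$. Divide numerator and denominator of \eqref{fc} by $n$ to rewrite the recurrence as
\begin{equation*}
a_n=\frac{1+(\beta-1)/n}{1+(\beta-1)/n+c\eta-c\,a_{n-1}}.
\end{equation*}
Pick a subsequence $a_{n_k}\to L$; by boundedness, extract a further subsequence so that $a_{n_k-1}\to M$ for some $M$, necessarily in $[\ell,L]$. Passing to the limit gives $L=1/(1+c\eta-cM)$. Since $M\le L$ and $x\mapsto 1/(1+c\eta-cx)$ is increasing on the relevant interval, we obtain $L\le 1/(1+c\eta-cL)$, i.e.\ $cL^2-(1+c\eta)L+1\ge 0$. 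The quadratic $p(t)=ct^2-(1+c\eta)t+1$ is non--negative exactly on $(-\infty,a_-]\cup[a_+,+\infty)$, and $L\le 1<a_+$ forces
\begin{equation*}
L\le a_-.
\end{equation*}

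\medskip

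\textbf{Step 3: Reverse inequality for $\ell$.} Symmetrically, pick $a_{m_k}\to\ell$ and extract so that $a_{m_k-1}\to M'\in[\ell,L]$. Then $\ell=1/(1+c\eta-cM')$ with $M'\ge\ell$, hence $\ell\ge 1/(1+c\eta-c\ell)$, i.e.\ $p(\ell)\le 0$, so $\ell\in[a_-,a_+]$. Combining with Step~2,
\begin{equation*}
a_-\le\ell\le L\le a_-,
\end{equation*}
so $\ell=L=a_-$ and the sequence converges to the smaller root $a_-=\bigl(1+\eta c-\sqrt{(1+\eta c)^2-4c}\bigr)/(2c)$ of $cz^2-(1+\eta c)z+1=0$. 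The positivity of the discriminant, needed for the limit to be real, follows from $\eta>1$ and $0<c<1$ via $(1+\eta c)^2>(1+c)^2\ge 4c$.

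\medskip

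The main subtlety will be Step~2: one might be tempted to try monotonicity, but $a_0=1>a_-$ while the sign of $a_{n+1}-a_n$ depends on whether $\beta\gtrless 1$, so the limsup/liminf argument is what makes the proof uniform in $\beta>0$. The crucial observation that closes the case is that the root $a_+$ exceeds $1$, so the alternative $L\ge a_+$ coming from $p(L)\ge 0$ is immediately excluded by Step~1.
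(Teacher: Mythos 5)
Your proof is correct, and the convergence argument is genuinely different from the paper's. Both proofs begin with the same induction giving $0<a_n\le 1$, and both ultimately identify the limit as the smaller root by noting that the limit must lie in $(0,1]$ while the larger root exceeds $1$ (your justification of $a_-<1<a_+$ is a little circular as written; the cleanest route is $p(1)=c(1-\eta)<0$, which places $1$ strictly between the roots). Where you diverge is in establishing convergence: the paper computes $\frac{1}{a_n}-\frac{1}{a}$ explicitly against the fixed--point relation and derives the contraction--type estimate $\limsup|a_n-a|\le c\,\limsup|a_{n-1}-a|$, which forces the limsup to vanish because $c<1$; you instead run a soft Bolzano--Weierstrass squeeze on $L=\limsup a_n$ and $\ell=\liminf a_n$, using the monotonicity of $x\mapsto 1/(1+c\eta-cx)$ together with the sign pattern of the quadratic $p$ to force $L\le a_-\le\ell$. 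The paper's route is shorter once one guesses the right quantity $1/a_n-1/a$ to estimate, and it is more quantitative (the factor $c$ suggests geometric decay up to the $O(1/n)$ perturbation); yours requires no such guess and makes transparent exactly which structural facts drive convergence, namely that the iteration map is increasing and that the unwanted root $a_+$ lies outside the invariant interval. Both arguments are uniform in $\beta>0$ and neither needs monotonicity of $\{a_n\}$.
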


\begin{proof}
First, we observe that a simple induction argument applied on
Lemma~\ref{relrec} gives $0 < a_n \le 1$, for all $n \ge 0$.

Suppose that $a = \displaystyle{\lim_{n \to +\infty} a_n}$ exists,
then \eqref{fc} implies
\begin{equation}\label{fca}
a = \frac{1}{1 + \eta c  - c  a} ,
\end{equation}
that is, $a$ is a solution of the equation
$$
c z^2 - \left(1 + \eta c\right)  z + 1 = 0.
$$
Since $a_n \le 1$ for all $n \ge 0$, we have ${a} \le 1$. Hence
$$
{a} = \frac{1+ \eta c-\sqrt{(1 + \eta c)^2 - 4c}}{2c} < 1.
$$
Now, we prove that $\{a_n\}$ is indeed convergent to $a$. With
\eqref{fc} and \eqref{fca}, we have
$$
\frac{1}{a_n} - \frac{1}{a} = \eta \, c \left(\frac{n}{n+\beta -1}-
1\right) - c  \left(\frac{n}{n+\beta -1} a_{n-1} -a \right).
$$
Then, using $0 < a_{n-1} \le 1$, and $0< a \le 1$, we get
\begin{eqnarray*}
\lefteqn{| a_n - a | = | a_n |\,| a |\, |\frac{1}{a_n} - \frac{1}{a} | <} \\
&<& \eta \, c
\left|\frac{n}{n+\beta -1}- 1\right| + c \, \left|\frac{n}{n+\beta -1}\right| \, |a_{n-1}
-a| + a\,
\left|\frac{\beta-1}{n+\beta -1}\right|.
\end{eqnarray*}
Hence
$$
\limsup | a_n - a | \le c \limsup | a_{n-1} - a |.
$$
Since $c < 1$, the lemma follows.
\end{proof}

From the sequence $\{a_n\}$ we construct a sequence $\{q_n(\eta)\}$
of polynomials in $\eta\, .$

\begin{lemma} \label{qus}
Define the sequence $\{q_n(\eta)\}$ by
$$
q_0(\eta) = 1,\quad q_{n+1}(\eta) = \frac{q_n(\eta)}{a_{n}},\quad n
\ge 0.
$$
Then $q_n(\eta)$, for $n\ge 1$, is a polynomial in $\eta$  (and
therefore in $\lambda$) such that $\deg q_n = n-1$, satisfying the
three--term recurrence relation
\begin{equation} \label{3trr}
(n+\beta -1)q_{n+1}(\eta) = \left(n + \beta - 1 + \eta \, c \, n\right)
q_n(\eta) - c \, n \, q_{n-1}(\eta), \quad n \ge 1,
\end{equation}
with initial conditions $q_0(\eta) = q_1(\eta) = 1$.
\end{lemma}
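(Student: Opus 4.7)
The plan is to derive the recurrence \eqref{3trr} directly from the definition $q_{n+1}(\eta) = q_n(\eta)/a_n$ together with \eqref{fc}, and then to read off the polynomial character and the degree by induction.

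First I would verify the initial conditions: since $a_0 = 1$ by Lemma~\ref{relrec}, we get $q_1(\eta) = q_0(\eta)/a_0 = 1$, matching the claimed $q_0(\eta) = q_1(\eta) = 1$. Next, from the defining relation we read off $a_n = q_n(\eta)/q_{n+1}(\eta)$ and $a_{n-1} = q_{n-1}(\eta)/q_n(\eta)$, both valid since Lemma~\ref{conv} has shown $0 < a_n \le 1$ for every $n \ge 0$, so no $q_n$ can vanish. Substituting these two expressions into the recurrence \eqref{fc} and clearing the denominator on the right yields
\begin{equation*}
q_n(\eta)\left(n+\beta-1+\eta\,c\,n - c\,n\,\frac{q_{n-1}(\eta)}{q_n(\eta)}\right) = (n+\beta-1)\,q_{n+1}(\eta),
\end{equation*}
which simplifies at once to \eqref{3trr}.

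With the three-term recurrence in hand, the polynomial nature and the degree count follow by an immediate induction on $n$. The base cases $q_0(\eta) = q_1(\eta) = 1$ are polynomials of degrees $0$ (and may be regarded as the conventional starting values; the statement $\deg q_n = n-1$ is relevant for $n \ge 1$). Assuming $q_{n-1}$ and $q_n$ are polynomials in $\eta$ with $\deg q_n = n-1$ and $\deg q_{n-1} = n-2$, the right-hand side of \eqref{3trr} is a polynomial in $\eta$ whose leading term comes from $\eta\,c\,n\,q_n(\eta)$ and has degree $n$; dividing by the nonzero constant $n+\beta-1$ (recall $\beta>0$) produces $q_{n+1}(\eta)$ as a polynomial of degree $n = (n+1)-1$, completing the induction.

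There is no real obstacle here: the computation is purely algebraic, and the only thing one needs to be careful about is that all the $q_n$ are nonzero so that the substitution $a_n = q_n/q_{n+1}$ is legitimate. That nonvanishing is guaranteed by the positivity of $\{a_n\}$ established via Lemma~\ref{relrec} and Lemma~\ref{conv}, together with the positivity of $n+\beta-1$ for $\beta>0$ and $n\ge 1$, which ensures the inductive step cannot produce a zero polynomial.
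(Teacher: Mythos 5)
Your proof is correct and follows essentially the same route as the paper's: the paper likewise observes that \eqref{3trr} is just \eqref{fc} rewritten via $a_n = q_n(\eta)/q_{n+1}(\eta)$, and then deduces the polynomial character and degree by induction from the recurrence. You simply make explicit the substitution, the nonvanishing of the $q_n$, and the leading-term count that the paper leaves to the reader.
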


\begin{proof}
The recurrence relation \eqref{3trr} is just relation \eqref{fc}
rewritten in terms of $q_n(\eta)$. Since $a_0 = 1$, then $q_1 =
1$, and thus \eqref{3trr} implies that, for $n\ge 1$, $q_n$ is a
polynomial in $\eta$ of degree $n-1$.
\end{proof}

Note that in the limit case $\lambda=0$, we have $S_n(x)=m_n(x;
\beta,c)$ for all $n=0,1,2, \ldots$. Therefore, $\eta=1$, $a_n=
q_n(\eta)=1$, for all $n=0,1,2,\ldots$, and $a=1.$

\bigskip

Next result shows that  the formal power series, (i.e, generating
function) for $\Delta$--Meixner--Sobolev orthogonal polynomials can
be reduced to  a formal power series involving Meixner polynomials.

\begin{proposition} \label{serie}
We have
\begin{equation} \label{eqser}
\sum_{n=0}^{+\infty} q_n(\eta) \, S_n(x) \, \omega^n = \frac{1}{1-\omega}
\sum_{n=0}^{+\infty} q_n(\eta) \, m_n(x;\beta-1,c) \, \omega^n.
\end{equation}
\end{proposition}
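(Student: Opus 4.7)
The plan is to reduce \eqref{eqser} to the identity \eqref{twotwo}, after rewriting it in terms of the sequence $\{q_n(\eta)\}$. First, I would observe that the defining relation for $q_n$ in Lemma~\ref{qus} gives, equivalently, $a_{n-1}=q_{n-1}(\eta)/q_n(\eta)$ for every $n\ge 1$. Combining \eqref{twotwo} with \eqref{der2} yields the cleaner identity
\begin{equation*}
S_n(x)-a_{n-1}S_{n-1}(x)=m_n(x;\beta-1,c),\qquad n\ge 1,
\end{equation*}
and multiplying by $q_n(\eta)$ converts this into the convenient form
\begin{equation*}
q_n(\eta)\,S_n(x)-q_{n-1}(\eta)\,S_{n-1}(x)=q_n(\eta)\,m_n(x;\beta-1,c),\qquad n\ge 1.
\end{equation*}

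Next, I would multiply this identity by $\omega^n$ and sum over $n\ge 1$. The left-hand side rearranges (by splitting the second sum and shifting the index) into $(1-\omega)\sum_{n=0}^{\infty}q_n(\eta)S_n(x)\omega^n - q_0(\eta)S_0(x)$, while the right-hand side equals $\sum_{n=0}^{\infty}q_n(\eta)m_n(x;\beta-1,c)\omega^n - q_0(\eta)m_0(x;\beta-1,c)$. Since $q_0=1$ and $S_0(x)=m_0(x;\beta-1,c)=1$, the two boundary contributions cancel, and after dividing by $(1-\omega)$ one obtains \eqref{eqser}.

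The argument is essentially a one-line telescoping once the correct reformulation of \eqref{twotwo} is in place, so there is no real obstacle; the only subtle point worth stating explicitly is that the identity is initially formal (an equality in $\mathbb{R}[x][[\omega]]$), with convergence issues postponed to the later explicit computation of the generating function in Section~4. I would present the manipulation at the level of formal power series to keep the proof self-contained and independent of any estimate on $q_n(\eta)$ or $S_n(x)$.
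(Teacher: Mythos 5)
Your proof is correct and follows essentially the same route as the paper: both arguments rest on rewriting \eqref{twotwo} via \eqref{der2} and the definition of $q_n(\eta)$ as the telescoping identity $q_n(\eta)S_n(x)-q_{n-1}(\eta)S_{n-1}(x)=q_n(\eta)\,m_n(x;\beta-1,c)$, and then performing a standard formal power series manipulation. The paper telescopes first to get $q_n(\eta)S_n(x)=\sum_{k=0}^n q_k(\eta)m_k(x;\beta-1,c)$ and then invokes the Cauchy product with $1/(1-\omega)$, whereas you sum the difference relation directly against $\omega^n$; these are the same computation read in opposite directions.
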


\begin{proof}
Equation \eqref{twotwo} gives
\begin{equation*} \label{eqtwoq}
q_n(\eta) \, m_n(x;\beta-1,c) = q_n(\eta) \, S_n(x) -
q_{n-1}(\eta) \, S_{n-1}(x),
\end{equation*}
and therefore
$$
q_n(\eta)\, S_n(x) = \sum_{k=0}^n q_k(\eta) \,  m_k(x;\beta-1,c).
$$
Thus, we have
\begin{align*}
&\sum_{n=0}^{+\infty} q_n(\eta) \, S_n(x) \,
\omega^n=\sum_{n=0}^{+\infty}\left[ \sum_{k=0}^{n}
q_k(\eta)\,m_k(x;\beta-1,c)\,  \right] \omega^{n} \\
&=\sum_{k=0}^{+\infty} q_k(\eta)\,m_k(x;\beta-1,c)
\sum_{n=k}^{+\infty}\omega^{n-k} =\sum_{n=0}^{+\infty} \omega^n
\sum_{k=0}^{+\infty} q_k(\eta)\,m_k(x;\beta-1,c)\,
\omega^k\\
&=\frac{1}{1-\omega}\,\sum_{k=0}^{+\infty}
q_k(\eta)\,m_k(x;\beta-1,c)\, \omega^k\, .
\end{align*}

\end{proof}

\section{Generating function for $\Delta$--Meixner--Sobolev polynomials}

In this section, we will obtain a generating function for
$\Delta$--Meixner--Sobolev orthogonal polynomials with $\beta >0$ by
means of Proposition \ref{serie}, where Meixner polynomials
$m_n(x;\beta-1,c)$ are considered. The general approach uses the
explicit expression for Meixner polynomials (\ref{exprep}), where
$\beta \in \mathbb{R}$, $\beta \neq 0, -1, -2,...$. Note that, in
the case $\beta=1$, Meixner polynomials $m_n(x;0,c)$ defined by
(\ref{mex-gen}) appear in Proposition \ref{serie}. Therefore, we
have  to distinguish $\beta = 1$ and $\beta \neq 1$. We begin with
the particular case $\beta=1$ due to their simplicity. From now on,
we will denote $$G_M(x,\omega,\lambda):=\sum_{n=0}^{+\infty}
q_n(\eta) \, S_n(x) \omega^n\, .$$

\subsection{Case $\beta=1$}

In this case the generating function  is stated in the following
theorem.

\begin{theorem} \label{beta1}
Let $\{S_n\}$ be the sequence of orthogonal polynomials associated
with the $\Delta$--Sobolev inner product \eqref{inpr}, with $\beta
= 1$, and normalized by the condition that the leading coefficient
of $S_n$ equals the leading coefficient of $m_n(x;1,c)$. Let
$\{q_n(\eta)\}$ be defined by the recurrence relation
\begin{equation} \label{tcheby}
q_{n+1}(\eta) = (1 + \eta \, c)q_n(\eta) - c \, q_{n-1}(\eta), \quad q_0(\eta) = q_1(\eta) = 1.
\end{equation}
Then, for $|\omega|< a \, c <1$,
\begin{equation} \label{fg-beta1}
 G_M(x,\omega,\lambda)=
\frac{1}{1- \omega}\left[ \gamma \left(1-\frac{\omega}{a
c}\right)^{x} \left(1-\frac{\omega}{a}\right)^{-x} + \delta(1-
\omega a)^{x} (1- \omega c a)^{-x}
 \right], \end{equation}
where
\begin{equation} \label{expbyg}
a = \frac{1+ \eta c-\sqrt{(1 + \eta c)^2 - 4c}}{2c}, \qquad \gamma = \frac{a-a^2c}{1-a^2c}, \qquad
    \delta = \frac{1-a}{1-a^2c}.
    \end{equation}
\end{theorem}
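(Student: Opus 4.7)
The plan is to apply Proposition~\ref{serie} with $\beta=1$, so that the right-hand side involves $m_n(x;0,c)$, and to combine this with an explicit closed form for $q_n(\eta)$ obtained by solving the constant-coefficient three-term recurrence \eqref{tcheby}. The characteristic equation is $z^2-(1+\eta c)z+c=0$. A short calculation relates this to the equation $cz^2-(1+\eta c)z+1=0$ of Lemma~\ref{conv}: multiplying the latter by $c$ and setting $w=cz$ yields exactly the characteristic equation. Thus the two characteristic roots are $ac$ and $1/a$, where $a$ is the constant of Lemma~\ref{conv}. The general solution is $q_n(\eta)=A(ac)^n+B\,a^{-n}$, and the initial conditions $q_0(\eta)=q_1(\eta)=1$ force $A=\delta$ and $B=\gamma$ with the values given in \eqref{expbyg}, as a direct linear-system check shows.

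Next, I would substitute this closed form into Proposition~\ref{serie}:
\begin{equation*}
G_M(x,\omega,\lambda)=\frac{1}{1-\omega}\sum_{n=0}^{+\infty}\bigl[\delta (ac)^n+\gamma a^{-n}\bigr]\,m_n(x;0,c)\,\omega^n.
\end{equation*}
Now I would split the sum into two pieces and apply Lemma~\ref{leglag} with $\beta=0$ (which is permitted since the lemma is stated for all real $\beta$) to each piece, once with parameter $ac\,\omega$ in place of $\omega$ and once with $\omega/a$. This yields precisely
\begin{equation*}
\delta(1-a\omega)^{x}(1-ac\omega)^{-x}+\gamma\bigl(1-\tfrac{\omega}{ac}\bigr)^{x}\bigl(1-\tfrac{\omega}{a}\bigr)^{-x},
\end{equation*}
which upon division by $1-\omega$ is exactly \eqref{fg-beta1}.

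Finally, I would verify the range of validity. Lemma~\ref{leglag} requires $|ac\omega|<c$ and $|\omega/a|<c$, that is, $|\omega|<1/a$ and $|\omega|<ac$. Since Lemma~\ref{conv} gives $0<a<1$ and $0<c<1$, we have $ac<1<1/a$, so the binding condition is $|\omega|<ac<1$, matching the hypothesis. The geometric series manipulation from Proposition~\ref{serie} also needs $|\omega|<1$, which is automatic.

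I do not anticipate a real obstacle: the argument is essentially a solve-and-substitute. The one mildly non-obvious step is recognizing that the roots of the characteristic polynomial of \eqref{tcheby} are $ac$ and $1/a$, rather than viewing them as independent quantities; once this identification is made, the formulas \eqref{expbyg} for $\gamma$ and $\delta$ emerge directly from the $2\times 2$ Vandermonde system for the initial conditions.
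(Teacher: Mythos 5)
Your proposal is correct and is essentially the paper's own proof: the authors likewise solve the constant-coefficient recurrence \eqref{tcheby} explicitly as $q_n(\eta)=\gamma\,a^{-n}+\delta\,(ac)^n$ and then invoke Proposition~\ref{serie} together with Lemma~\ref{leglag} (with $\beta=0$, i.e.\ $m_n(x;0,c)$) applied to the two geometric pieces. You have simply filled in the details the paper leaves implicit, including the identification of the characteristic roots as $ac$ and $1/a$ and the check that the binding convergence condition is $|\omega|<ac$.
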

\begin{proof}
If $\beta = 1$ the second order difference equation \eqref{3trr}
is reduced to \eqref{tcheby} and, therefore we have
$$
q_n(\eta) = \frac{1}{1 - a^2 c} \left( (a- a^2 c)\frac{1}{a^n} + (1
- a)(a c)^{n}\right).
$$
Thus, the theorem follows from Proposition~\ref{serie} and
Lemma~\ref{leglag}.
\end{proof}

\noindent \textbf{Remark.} It is important to note that, in the
limit case $\lambda=0$, we recover the generating function for
classical Meixner polynomials (\ref{genlag}) from (\ref{fg-beta1}),
since in this sitation $q_n(\eta)=1$, for all $n=0,1,2,\ldots$,
$a=1$, $\gamma=1$, and $\delta=0\, .$

\subsection{Case $\beta \neq 1$}

Now, we suppose $\beta > 0$ and $\beta \neq 1$. We will deduce a
generating function for the polynomials $S_n(x)$ starting from
relation \eqref{eqser}. First, we need a generating function for the
polynomials $q_n(\eta)\, .$

\begin{lemma} \label{le.4.1}
Let $\beta > 0, \beta \neq 1$, and let $\{q_n(\eta)\}$ be the
sequence of polynomials defined by the recurrence relation
\eqref{3trr}. Put
\begin{equation} \label{eqF}
F(\omega) = \sum_{n=0}^{+\infty} q_n(\eta)\,(\beta-1)_n
\frac{\omega^n}{n!},
\end{equation}
with $|\omega|<a <1$. Then,
\begin{equation} \label{exprF}
F(\omega) = \left(1-\frac{\omega}{a}\right)^{-(\beta
-1)\gamma}(1-\omega c a)^{-(\beta -1)\delta},
\end{equation}
where $a$, $\gamma$ and $\delta$ are defined in (\ref{expbyg}).
\end{lemma}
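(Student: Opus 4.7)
The plan is to convert the three--term recurrence \eqref{3trr} for $q_n(\eta)$ into a first--order linear ODE for $F(\omega)$, and to solve that ODE explicitly. Writing $c_n := q_n(\eta)(\beta-1)_n/n!$ and using $(\beta-1)_{n+1}=(n+\beta-1)(\beta-1)_n$, one has
$$
(n+\beta-1)\,q_{n+1}(\eta)\,\frac{(\beta-1)_n}{n!}=(n+1)c_{n+1}.
$$
I would multiply \eqref{3trr} through by $(\beta-1)_n\omega^n/n!$ and sum over $n\ge 0$ (the relation also holds for $n=0$ since the $q_{-1}$ term is killed by the factor $n$). The left--hand side becomes $F'(\omega)$; on the right, using $\sum n c_n\omega^n=\omega F'(\omega)$ and, in the last term, shifting the index by one gives $c\omega\,[\omega F'(\omega)+(\beta-1)F(\omega)]$. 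After collecting terms one obtains the ODE
$$
\bigl[\,1-(1+\eta c)\omega+c\omega^2\,\bigr]\,F'(\omega)=(\beta-1)(1-c\omega)\,F(\omega).
$$

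Next, by Lemma \ref{conv} the numbers $a$ and $1/(ca)$ are the two roots of $cz^2-(1+\eta c)z+1=0$, so
$$
1-(1+\eta c)\omega+c\omega^2=\left(1-\frac{\omega}{a}\right)(1-ca\,\omega).
$$
The ODE is then separable, and a partial--fraction decomposition
$$
\frac{1-c\omega}{(1-\omega/a)(1-ca\,\omega)}=\frac{A}{1-\omega/a}+\frac{B}{1-ca\,\omega}
$$
leads, after integration, to $F(\omega)=C\,(1-\omega/a)^{-(\beta-1)Aa}(1-ca\,\omega)^{-(\beta-1)B/(ca)}$. The initial value $F(0)=q_0(\eta)=1$ forces $C=1$.

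It remains to check that the exponents coincide with the $\gamma,\delta$ of \eqref{expbyg}. Taking residues at $\omega=a$ and $\omega=1/(ca)$ (and using $ca+1/a=1+\eta c$, which expresses that $a$ is a root of $cz^2-(1+\eta c)z+1=0$), a direct computation gives $Aa=a(1-ca)/(1-ca^2)=\gamma$ and $B/(ca)=(1-a)/(1-ca^2)=\delta$. Substituting these values yields \eqref{exprF}.

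The main obstacle is purely bookkeeping: one must (i) correctly convert the indexed recurrence into the coefficients of $F$ and its derivative, and (ii) verify that the partial--fraction residues collapse exactly to $\gamma$ and $\delta$. A small preliminary point is the convergence of \eqref{eqF} on $|\omega|<a$: since $q_{n+1}/q_n=1/a_n\to 1/a$ by Lemma \ref{conv} and $(\beta-1)_{n+1}/[(n+1)(\beta-1)_n]\to 1$, the ratio test gives radius of convergence $a$, which justifies differentiating $F(\omega)$ term by term inside this disk.
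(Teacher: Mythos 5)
Your proposal is correct and follows essentially the same route as the paper: introduce the coefficients $h_n(\eta)=q_n(\eta)(\beta-1)_n/n!$, translate the recurrence \eqref{3trr} into the first--order ODE $\bigl[1-(1+\eta c)\omega+c\omega^2\bigr]F'(\omega)=(\beta-1)(1-c\omega)F(\omega)$, factor the quadratic via the roots $a$ and $1/(ca)$, and integrate the partial--fraction decomposition with $F(0)=1$. Your residue computations $Aa=\gamma$ and $B/(ca)=\delta$ agree with \eqref{expbyg}, and the ratio--test justification of the radius of convergence matches the paper's remark.
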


\begin{proof}
Observe that the ratio test shows that the series in the
right--hand side of \eqref{eqF} is convergent if $|\omega|<a <1$.
To simplify, if we write
\begin{equation}
\label{def-h} h_n(\eta) = \frac{q_n(\eta) (\beta-1)_n}{n!}, \quad
n \geq 0,
\end{equation}
then
$$F(\omega) = \sum_{n=0}^{+\infty} h_n(\eta) \omega^n.
$$
From \eqref{3trr}, we obtain the recurrence relation for
$\{h_n(\eta)\}$ as follows
\begin{equation} \label{3trrhn}
(n+1) h_{n+1}(\eta)=\left[n(1+\eta c)+\beta-1\right] h_n(\eta) - c
(n+\beta-2)h_{n-1}(\eta), \quad n\ge 1,
\end{equation}
with $h_0(\eta) = 1, h_1(\eta) = \beta-1$.

Multiplying \eqref{3trrhn} times $\omega^n$, and summing over $n=
1,2, \ldots$,  we obtain
$$
F'(\omega)- h_1(\eta) = (1 + \eta \, c) \omega F'(\omega) + (\beta - 1)
(F(\omega)-h_0(\eta)) - c \omega^2 F'(\omega)- c (\beta-1) \omega
F(\omega),
$$
hence
$$
F'(\omega)\left[1-(1 + \eta c)\omega + c \omega^2\right] = (\beta - 1)
F(\omega)(1-c \omega),
$$
with $1 + \eta c = 1/a + ca$. Then, we get
$$
F'(\omega)\left(1-\frac{\omega}{a}\right)(1 - \omega c a) =
(\beta-1) F(\omega)(1-c \omega),
$$
and, therefore, we have
$$
\left\{ \begin{array}{l}
\displaystyle{\frac{F'(\omega)}{F(\omega)} = (\beta-1)\left(
\frac{\gamma/a}{1-\frac{\omega}{a}} + \frac{\delta
c a}{1 - \omega c a}\right),} \\
~\\
F(0) = h_0(\eta) = 1,
\end{array}
\right.$$  where $\gamma$ and $\delta$ are defined in (\ref{expbyg}).
Solving this initial value problem, we obtain (\ref{exprF}).
\end{proof}

\noindent \textbf{Remark.} Note that in the limit case $\lambda=0$,
we have $a=1$ and, therefore $\gamma=1$, and $\delta=0.$ Thus, we
deduce $F(\omega)=(1-\omega)^{-\beta+1}$.

\bigskip

Now, we have the necessary tools  to obtain a generating function
for $\Delta$--Meixner--Sobolev orthogonal polynomials with $\beta
\neq 1.$

\begin{theorem} \label{mth}
Let $\{S_n\}$ be the sequence of polynomials orthogonal with
respect to the $\Delta$--Sobolev inner product \eqref{inpr} with
$\beta \neq 1$, and normalized by the condition that the leading
coefficient of $S_n(x)$ equals the leading coefficient of
$m_n(x;\beta,c)$. Let $\{q_n(\eta)\}$ be defined by the recurrence
relation (\ref{3trr}). Then, for $|\omega|< a c <1$,
\begin{align} \label{fg-betan1}
 G_M(x,\omega,\lambda) &=
\frac{1}{1- \omega}  (1-c a \omega)^{-(\beta-1) \delta} \left(
1-\frac{\omega}{a} \right)^{-(\beta-1) \gamma}
\left(1-\frac{\omega}{a
c}\right)^{x} \left(1-\frac{\omega}{a}\right)^{-x} \nonumber  \\
 &~\times \, {}_2F_1\left(  -x, (\beta-1)\delta; \beta-1,
 \frac{\omega (c-1)(1-a^2c)}{(1-ca\omega)(ac-\omega)}\right),
\end{align}
where $a$, $\gamma$ and $\delta$ are defined in (\ref{expbyg}).
\end{theorem}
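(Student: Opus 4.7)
The plan is to start from Proposition~\ref{serie}, so that the task becomes the evaluation of $H(x,\omega):=\sum_{n\ge 0}q_n(\eta)\,m_n(x;\beta-1,c)\,\omega^n$; then $G_M(x,\omega,\lambda)=H(x,\omega)/(1-\omega)$. Expanding $m_n(x;\beta-1,c)$ by the hypergeometric representation \eqref{exprep}, interchanging the summations over $n$ and the internal ${}_2F_1$ index $k$, and using $(-n)_k/n!=(-1)^k/(n-k)!$ for $n\ge k$, I would rewrite
\begin{equation*}
H(x,\omega)=\sum_{k\ge 0}\frac{(-x)_k\,(1/c-1)^k\,\omega^k}{k!\,(\beta-1)_k}\,F^{(k)}(\omega),
\end{equation*}
where $F$ is the closed form from Lemma~\ref{le.4.1}: after substituting $j=n-k$ the inner sum over $n\ge k$ matches exactly the Taylor series of the $k$-th derivative $F^{(k)}(\omega)$. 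This transfers the problem to manipulating derivatives of a known function.

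Next I would apply Leibniz to $F^{(k)}(\omega)$ using the product structure $F(\omega)=(1-\omega/a)^{-A}(1-\omega ca)^{-B}$, with $A:=(\beta-1)\gamma$ and $B:=(\beta-1)\delta$. After reindexing $k=j+l$, the sums decouple and give
\begin{equation*}
H(x,\omega)=(1-\omega/a)^{-A}(1-\omega ca)^{-B}\sum_{j,l\ge 0}\frac{(-x)_{j+l}\,(A)_j(B)_l}{(\beta-1)_{j+l}\,j!\,l!}\,u^j v^l,
\end{equation*}
with $u=(1-c)\omega/\bigl(c(a-\omega)\bigr)$ and $v=(1-c)a\omega/(1-ca\omega)$. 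The crucial arithmetic observation is $\gamma+\delta=1$, immediate from \eqref{expbyg}, so $A+B=\beta-1$: the inner double series is an Appell $F_1$-type series in which the denominator parameter equals the sum of the two upper Pochhammer parameters.

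In this regime Appell's $F_1$ admits a classical reduction to a single ${}_2F_1$. I would derive it on the spot by writing $(A)_j(B)_l/(A+B)_{j+l}$ as the beta integral $\tfrac{\Gamma(A+B)}{\Gamma(A)\Gamma(B)}\int_0^1 t^{A+j-1}(1-t)^{B+l-1}\,dt$, swapping sum and integral, summing the free $j,l$-series by the binomial theorem to $(1-tu-(1-t)v)^x$, and recognizing the remaining one-dimensional integral via Euler's integral formula. Factoring $1-tu-(1-t)v=(1-u)\bigl(1-(1-t)(v-u)/(1-u)\bigr)$ and substituting $s=1-t$ produces the symmetric form $(1-u)^x\,{}_2F_1\bigl(-x,B;\beta-1;(v-u)/(1-u)\bigr)$. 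The elementary identities $1-u=(1-\omega/(ac))/(1-\omega/a)$ and $(v-u)/(1-u)=\omega(c-1)(1-a^2c)/\bigl((1-ca\omega)(ac-\omega)\bigr)$ then match exactly the prefactor and the argument of the ${}_2F_1$ in \eqref{fg-betan1}; combined with the explicit form of $F(\omega)$ and the overall factor $1/(1-\omega)$ from Proposition~\ref{serie}, this yields the theorem.

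The principal obstacle I expect is the bookkeeping in the triple-sum rearrangement that exposes the Appell $F_1$ structure together with the clean identification of $u$ and $v$; once $A+B=\beta-1$ is noticed the reduction follows from a standard beta-integral argument, and the remaining rational-function simplifications are routine but require some care to match \eqref{fg-betan1} verbatim.
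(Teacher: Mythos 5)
Your proposal is correct and follows the paper's own skeleton exactly up to the final reduction: both arguments start from Proposition \ref{serie}, expand $m_n(x;\beta-1,c)$ via \eqref{exprep} to write the sum as $\sum_{k\ge 0}\frac{(-x)_k(1/c-1)^k\omega^k}{k!\,(\beta-1)_k}F^{(k)}(\omega)$, and apply Leibniz to the product form of $F$ from Lemma \ref{le.4.1}. Where you genuinely diverge is in how the resulting double sum is closed. The paper sums the inner index into a ${}_2F_1(s-x,(\beta-1)\gamma;s+\beta-1;-\omega_2)$, applies the Pfaff--Kummer transformation, and then invokes the tabulated formula (65.2.2) of Hansen, whose applicability rests on $(\beta-1)(1-\delta)=(\beta-1)\gamma$. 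You instead recognize the double sum as an Appell $F_1(-x;A,B;\beta-1;u,v)$ with $A+B=\beta-1$ (the same key fact $\gamma+\delta=1$) and prove the classical reduction to a single ${}_2F_1$ by an Euler beta-integral argument; your identifications of $u$, $v$, $1-u=(1-\omega/(ac))/(1-\omega/a)$ and $(v-u)/(1-u)$ all check out against \eqref{fg-betan1}. Your route is more self-contained and transparent than citing a table of series. One small caveat: the integral representation of $(A)_j(B)_l/(A+B)_{j+l}$ requires $A,B>0$, which holds only for $\beta>1$ (one has $\gamma,\delta>0$, but $\beta-1$ may be negative when $0<\beta<1$); in that range you should either invoke analytic continuation in $\beta$ of both sides of the identity, or simply quote the standard $F_1$ reduction formula, which is valid for general parameters. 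This is easily patched and does not affect the correctness of the argument.
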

\begin{proof}
We start giving two expressions for $k$--th derivative of
$F(\omega)$ defined in (\ref{eqF}). First, taking into account
(\ref{def-h}), we have
\begin{equation} \label{derF-1}
F^{(k)}(\omega)=\sum_{n=k}^{+\infty} \frac{n!}{(n-k)!} \, h_n(\eta) \, \omega^{n-k}.\end{equation}
 On the other hand, from (\ref{exprF}), we get
\begin{align} \label{derF-2} F^{(k)}(\omega)&= \sum_{s=0}^k \binom{k}{s} \left[(1- c a \omega)^{-(\beta
-1)\delta} \right]^{(s)} \left[
\left(1-\frac{\omega}{a}\right)^{-(\beta -1)\gamma}
\right]^{(k-s)} \nonumber \\
&=(1- c a \omega)^{-(\beta
-1)\delta}\left(1-\frac{\omega}{a}\right)^{-(\beta
-1)\gamma} \nonumber \\
&~\times \sum_{s=0}^{k} (-1)^k k! \binom{-(\beta-1)\delta}{s}
\binom{-(\beta-1)\gamma}{k-s}\left( \frac{ca}{1-ca\omega}
\right)^s \left( \frac{1}{a-\omega} \right)^{k-s}.
\end{align}
Now, with (\ref{derF-1}) and the explicit representation of Meixner
polynomials (\ref{exprep}),
 we get
\begin{eqnarray*}
\lefteqn{\sum_{n=0}^{+\infty}q_n(\eta)\, m_n(x; \beta-1,c)\, \omega^n
=}\\
&=& \sum_{n=0}^{+\infty}q_n(\eta)\,\left[
\frac{(\beta-1)_n}{n!}\sum_{k=0}^n \binom{n}{k}\frac{(-x)_k}{(\beta-1)_k}
\left(\frac{1}{c}-1\right)^k\right] \, \omega^n  \\
&=&\sum_{k=0}^{+\infty}  \frac{(-x)_k}{k! \, (\beta-1)_k}\left(\frac{1}{c}-1\right)^k \, \omega^k
\sum_{n=k}^{+\infty} \frac{n!}{(n-k)!} \, h_n(\eta)\omega^{n-k}\\
&=& \sum_{k=0}^{+\infty} \frac{(-x)_k}{k! \, (\beta-1)_k}\left(\frac{1}{c}-1\right)^k \, \omega^k \,
F^{(k)}(\omega).
\end{eqnarray*}
Thus, from (\ref{derF-2}), we obtain
\begin{align*}
&\sum_{n=0}^{+\infty}\,q_n(\eta)\,m_n(x; \beta-1,c)\,\omega^n = (1- c a
\omega)^{-(\beta
-1)\delta} \left(1-\frac{\omega}{a}\right)^{-(\beta -1)\gamma}\\
&\times \sum_{k=0}^{+\infty} \left\{\frac{(-x)_k}{(\beta-1)_k}
\left(1-\frac{1}{c} \right)^k \omega^k \right.\\
&\qquad\times\left.\sum_{s=0}^k \binom{-(\beta-1)\delta}{s}
\binom{-(\beta-1)\gamma}{k-s}\left(
\frac{ca}{1-ca\omega} \right)^s \left( \frac{1}{a-\omega}
\right)^{k-s} \right\} \\
&=(1- c a \omega)^{-(\beta -1)\delta}
\left(1-\frac{\omega}{a}\right)^{-(\beta -1)\gamma}\\
&\times
\sum_{s=0}^{+\infty} \left\{ \binom{-(\beta-1)\delta}{s}\left(
1-\frac{1}{c} \right)^s \left( \frac{ca}{1-ca\omega} \right)^s
\omega^s \right. \\
&\qquad\times\left. \sum_{k=s}^{+\infty} \binom{-(\beta-1)\gamma}{k-s}
\frac{(-x)_k}{(\beta-1)_k} \left( 1-\frac{1}{c} \right)^{k-s}
\left( \frac{1}{a-\omega} \right)^{k-s} \omega^{k-s}\right\} \\
&=(1- c a \omega)^{-(\beta -1)\delta}
\left(1-\frac{\omega}{a}\right)^{-(\beta -1)\gamma}\\
&\times\sum_{s=0}^{+\infty} \left\{ \binom{-(\beta-1)\delta}{s}
\frac{(-x)_s}{(\beta-1)_s} \left( 1-\frac{1}{c} \right)^s \left(
\frac{ca}{1-ca\omega} \right)^s
\omega^s \right. \\
&\qquad\times\left. \sum_{m=0}^{+\infty} \binom{-(\beta-1)\gamma}{m}
\frac{(-x+s)_{m}}{(\beta-1+s)_m} \left( 1-\frac{1}{c} \right)^{m}
\left(\frac{1}{a-\omega} \right)^{m} \omega^{m}\right\},
\end{align*}
where in last equality we use $(-x)_{s+m}=(-x)_s (-x+s)_m$. If we
denote
\begin{equation}\label{omega1}
\omega_1:=\left( 1-\frac{1}{c}
\right)\frac{c a \omega}{1-c a \omega}=\frac{(c-1) a \omega}{1-c a
\omega},  \quad \omega_2:= \frac{\omega}{a-\omega}\left( 1-\frac{1}{c}
\right)=\frac{(c-1)\omega}{c(a-\omega)},
\end{equation}
the above expression yields
\begin{align*}
&\sum_{n=0}^{+\infty}q_n(\eta)\,m_n(x; \beta-1,c)\,\omega^n=(1- c a
\omega)^{-(\beta
-1)\delta} \left(1-\frac{\omega}{a}\right)^{-(\beta -1)\gamma}\\
&\times \sum_{s=0}^{+\infty} \frac{((\beta-1) \delta)_s
(-x)_s}{(\beta-1)_s}\, \frac{(-\omega_1)^s}{s!}
\sum_{m=0}^{+\infty} \frac{((\beta-1) \gamma)_m
(-x+s)_m}{(\beta-1+s)_m}\, \frac{(-\omega_2)^m}{m!}\\
&=(1- c a \omega)^{-(\beta
-1)\delta} \left(1-\frac{\omega}{a}\right)^{-(\beta -1)\gamma}\\
&\times \sum_{s=0}^{+\infty} \frac{((\beta-1) \delta)_s
(-x)_s}{(\beta-1)_s}\, \frac{(-\omega_1)^s}{s!}\,
{}_2F_1(s-x,(\beta-1)\gamma; s+\beta-1; -\omega_2) \\
&=(1- c a \omega)^{-(\beta -1)\delta}
\left(1-\frac{\omega}{a}\right)^{-(\beta -1)\gamma}
(1+\omega_2)^{-(\beta-1)\gamma}\\
&\times \sum_{s=0}^{+\infty} \frac{(-x)_s((\beta-1)
\delta)_s}{(\beta-1)_s}\, \frac{(-\omega_1)^s}{s!}\,
{}_2F_1\left(x+\beta-1, (\beta-1)\gamma ; s+\beta-1;
\frac{\omega_2}{\omega_2+1}\right),
\end{align*}
where in the last equality we have been able to apply the
Pfaff--Kummer transformation  (see, for instance, \cite[f.
(1.4.9)]{ism} or \cite[p. 425]{han})
$${}_2F_1(a,b; c;z)=(1-z)^{-b}\,{}_2F_1\left(c-a,b;
c;\frac{z}{z-1}\right)\, , \quad |z|<1\, ,$$ since $|\omega_2|<1$
for $|\omega|<ac\, .$

In order to simplify the above expression, we can observe that we
are in situation to apply formula (65.2.2) in \cite{han}, i.e.,
$$\sum_{k=0}^{+\infty} \frac{(a)_k (b)_k}{(c)_k}\frac{y^k}{k!}\,
_2F_1(c-a, c-b;c+k;z)=(1-z)^{a+b-c}\, _2F_1(a,b;c;z+y-zy),$$ since
$(\beta-1)(1-\delta)=(\beta-1)\gamma.$ Therefore, after some
simplifications, we get
\begin{align*}
&\sum_{n=0}^{+\infty}q_n(\eta)\, m_n(x; \beta-1,c)\, \omega^n=(1- c a
\omega)^{-(\beta -1)\delta}
\left(1-\frac{\omega}{a}\right)^{-(\beta -1)\gamma}(1+\omega_2)^x\\
&\times \, {}_2F_1\left( -x, (\beta-1)\delta; \beta-1;
\frac{\omega_2-\omega_1}{\omega_2+1} \right).
\end{align*}
Finally, using Proposition \ref{serie}, and the explicit expressions for $\omega_1$,
 and $\omega_2$ given in (\ref{omega1}), we obtain (\ref{fg-betan1}).
\end{proof}

\noindent \textbf{Remark.} In the limit case $\lambda=0$, we have
\begin{align*}
&\sum_{n=0}^{+\infty} m_n(x; \beta,c) \, \omega^n\\
&=\frac{1}{1-\omega} (1-\omega)^{-(\beta-1)} \left(
\frac{c-\omega}{c(1-\omega)}\right)^x \, _2F_1\left( -x,0;\beta-1;
\frac{-\omega(1+c^2)}{(1-c\omega)(c-\omega)} \right)\\
&= (1-\omega)^{-x-\beta} \left(1-\frac{\omega}{c} \right)^x,
\end{align*}
and we obtain again the generating function for Meixner polynomials.

\noindent \textbf{Remark.} Of course, the case $\beta = 1$ in
Theorem \ref{beta1} can be deduced from (\ref{fg-betan1}), since as
we can easily check
\begin{align*}
\lim_{\beta \to 1} {}_2F_1\left( -x, (\beta-1)\delta; \beta-1;
\frac{\omega_2-\omega_1}{\omega_2+1} \right) &= \gamma + \delta
\sum_{k=0}^{+\infty} \frac{(-x)_k}{k!} \left(
\frac{\omega_2-\omega_1}{1+\omega_2}\right)^k
\\
&=\gamma + \delta \left(1 -
\frac{\omega_2-\omega_1}{1+\omega_2}\right)^x
\end{align*}
and the result follows from the explicit expressions for $\omega_1$
and $\omega_2$.

\section{Generating function for Laguerre--Sobolev orthogonal
polynomials}

In this section, by using a limit process we will recover the
generating function for the Laguerre--Sobolev orthogonal polynomials
obtained in \cite{mp}. As it is well--known (see, for instance,
\cite[p. 177]{Ch}) there exists a limit relation between Meixner and
Laguerre orthogonal polynomials, namely
\begin{equation} \label{limit-ml}
\lim_{c\uparrow 1}c^n m_n^{(\alpha+1,c)}\left( \frac{x}{1-c}
\right)=L_n^{(\alpha)}(x)\, , \quad \alpha>-1\, ,
\end{equation}
where $L_n^{(\alpha)}(x)$ denotes the Laguerre polynomials with
leading coefficient $(-1)^n/n!$ orthogonal with respect to the inner
product $$(f,g)_L=\int_0^{+\infty} f(x)\, g(x)\,x^{\alpha}\,
e^{-x}dx\, .$$

In \cite[Prop. 4.4]{AGMB2} the authors give a formula which
extends the limit relation (\ref{limit-ml}) to the
$\Delta$--Sobolev case in the framework of $\Delta$--coherence. It
is important to note that
 in this paper Meixner polynomials are considered orthogonal
 with respect to the inner product $(1-c)^{\beta}(f,g)$ where
 $(f,g)$ is given in (\ref{mip}). Anyway, taking
 \begin{equation} \label{values}
 \beta=\alpha+1\quad \textrm{and} \quad
 \lambda=\frac{\tilde{\lambda}}{(1-c)^2}\, , \quad
 \tilde{\lambda}>0\, ,
 \end{equation}
 and using the same arguments as in \cite{AGMB2}, we can prove
 \begin{equation} \label{limit-msls}
\lim_{c\uparrow 1}c^n S_n\left( \frac{x}{1-c} \right)=S_n^{L}(x)\,
,
\end{equation}
where $\{S_n^{L} \}$ are the so--called Laguerre--Sobolev
polynomials with leading coefficient $(-1)^n/n!$ orthogonal with
respect to the inner product $$(f,g)_{\tilde{S}}=\int_0^{+\infty}
f(x)\, g(x)\,x^{\alpha}\, e^{-x}dx+\tilde{\lambda}\int_0^{+\infty}
f'(x)\, g'(x)\,x^{\alpha}\, e^{-x}dx \, .$$ Note that the values for
$\beta$ and $\lambda$ given in (\ref{values}) imply
$$\eta=1+\frac{\tilde{\lambda}}{c^2}\, .$$

In \cite{mp}, a generating function for  polynomials $S_n^{L}(x)$
was obtained. In fact, if we denote by $\{q_n^L(\tilde{\lambda})\}$
the sequence of polynomials defined by the recurrence relation
\begin{equation}
\label{rrq-l}
(n+\alpha)q_{n+1}^L(\tilde{\lambda})=\left[n(\tilde{\lambda}+2)+\alpha\right]
q_n^L(\tilde{\lambda})-n q_{n-1}^L(\tilde{\lambda})\, ,
\end{equation}
with $q_{0}^L(\tilde{\lambda})=q_{1}^L(\tilde{\lambda})=1\, ,$
 and
$$
G_L(x,\omega, \tilde{\lambda}):=\sum_{n=0}^{\infty}
q_{n}^L(\tilde{\lambda})S_n^L(x)\,\omega^n\, .
$$
Then, for $|\omega|<\tilde{a}<1\, ,$ we get (see Theorems 2.1 and
3.1 in \cite{mp})
\begin{itemize}
\item For $\alpha=0$
\begin{equation*} \label{fga0}
G_L(x,\omega,\tilde{\lambda})=\frac{1}{(1-\omega)(1+\tilde{a})}\left[
\exp \left(\frac{-x \omega \tilde{a}}{1-\omega \tilde{a}}
\right)+\tilde{a}\exp \left(\frac{-x
\omega/\tilde{a}}{1-\omega/\tilde{a}} \right) \right]
\end{equation*}

\item For $\alpha\neq 0$
\begin{align*} \label{fgan0}
G_L(x,\omega,\tilde{\lambda})&=\frac{1}{1-\omega}\, (1-\tilde{a}
\omega)^{\frac{-\alpha}{1+\tilde{a}}}\,
\left(1-\frac{\omega}{\tilde{a}} \right)^{\frac{\alpha\,
\tilde{a}}{1+\tilde{a}}}\,\exp\left(\frac{-x\omega/\tilde{a}}{1-\omega/\tilde{a}} \right)\\
& \times {}_1F_1\left( \frac{\alpha}{1+\tilde{a}}; \alpha; \frac{x
\omega (1-\tilde{a}^2)}{(\tilde{a}-\omega)(1-\omega \tilde{a})}
\right)\, ,
\end{align*}
\end{itemize}
where, in both cases,
$$\tilde{a}=\frac{\tilde{\lambda}+2-\sqrt{\tilde{\lambda}^2+4\tilde{\lambda}}}{2}\,
.$$

Using again the values for $\beta$ and $\tilde{\lambda}$ given in
(\ref{values}) and taking limits when $c\uparrow 1$ in (\ref{3trr})
we recover (\ref{rrq-l}) with the same initial conditions.
Therefore, we get  \begin{equation} \label{c1qs} \lim_{c\uparrow
1}q_n(\eta)= q_n^L(\tilde{\lambda})\, .\end{equation} Thus, using
(\ref{limit-msls}) and (\ref{c1qs}) we obtain \begin{align*}
\lim_{c\uparrow 1}G_M\left(\frac{x}{1-c}, c\, \omega,
\frac{\tilde{\lambda}}{(1-c)^2}\right)& = \lim_{c\uparrow
1}\sum_{n=0}^{+\infty}q_n\left(\eta\right)
S_n\left(\frac{x}{1-c}\right)\, c^n\,\omega^n \\
&=\sum_{n=0}^{+\infty}q_n^L(\tilde{\lambda})\, S_n^{L}(x)\,
\omega^n=G_L(x,\omega,\tilde{\lambda})\, , \end{align*} for
$|w|<\tilde{a}$ (note that $\lim_{c\uparrow 1}a=\tilde{a}$ with
$\beta$ and $\lambda$ given in (\ref{values})).

Therefore, we  claim that we have recovered the generating
functions for Laguerre--Sobolev orthogonal polynomials from the
generating functions for $\Delta$--Meixner--Sobolev orthogonal
polynomials.


\begin{thebibliography}{99}

\bibitem{AGM2}
I. Area, E. Godoy, F. Marcell\'{a}n, Inner products involving
differences: the Meixner-–Sobolev polynomials,\textit{J. Difference
Equations Appl.} \textbf{6} (2000) 1--31.

\bibitem{AGMB1}
I. Area, E. Godoy, F. Marcell\'{a}n, J. J. Moreno-Balc\'{a}zar, Ratio and
Plancherel–-Rotach asymptotics for Meixner–-Sobolev polynomials,
\textit{J. Comput. Appl. Math.} \textbf{116} (1) (2000) 63--75.

\bibitem{AGMB2}
I. Area, E. Godoy, F. Marcell\'{a}n, J. J. Moreno-Balc\'{a}zar,
$\Delta$--Sobolev orthogonal polynomials of Meixner type:
asymptotics and limit relation,  \textit{J. Comput. Appl. Math.}
\textbf{178}  (2005) 21--36.

\bibitem{Ch} T. S. Chihara, An Introduction to Orthogonal Polynomials, Gordon and Breach, New York,
    1978.

\bibitem{ism} M. E. Ismail, Classical and Quantum Orthogonal
Polynomials in one variable, Encyclopedia of Mathematics and its
Applications 98, Cambridge University Press, 2005.

\bibitem{han} E. R. Hansen, A Table of Series and Products,
Prentice Hall, Inc., Englewood Cliffs, N.J., 1975.

\bibitem{mmb} F. Marcell\'{a}n, J. J. Moreno--Balc\'{a}zar, Asymptotics and
zeros of Sobolev orthogonal polynomials on unbounded supports,
\textit{Acta Appl. Math.} \textbf{94} (2006) 163--192.

\bibitem{mp} H. G. Meijer, M. A. Pi\~{n}ar, A generating function for Laguerre--Sobolev
orthogonal polynomials, \textit{J. Approx. Theory} \textbf{120}
 (2003) 111--123.

\bibitem{nik-uva} A. F. Nikiforov, V. B. Uvarov, Special Functions of Mathematical Physics,
Birkh\"{a}user Verlag Basel, 1988.

\end{thebibliography}
\end{document}